\theoremstyle{plain}
\newtheorem{thm}[subsection]{Theorem}
\newtheorem{lemma}[subsection]{Lemma}
\newtheorem{prop}[subsection]{Proposition}
\newtheorem{cor}[subsection]{Corollary}
\theoremstyle{remark}
\newtheorem{remark}[subsection]{Remark}
\theoremstyle{definition}
\numberwithin{equation}{subsection}
\def\cD{\mathcal{D}}
\def\cO{\mathcal{O}}
\def\11{\mathbf{1}}
\def\AA{\mathbf{A}} 
\def\CC{\mathbf{C}} 
\def\DD{\mathbf{D}}
\def\II{\mathbf{i}}
\def\PP{\mathbf{P}} 
\def\qq{\mathbf{q}}
\def\QQ{\mathbf{Q}}
\def\TT{\mathbf{T}}
\def\ZZ{\mathbf{Z}}
 \def\Coh{\mathrm{Coh}}
 \def\conv{\cdot}
 \def\DR{\mathscr{DR}}
 \def\End{\mathrm{End}}
 \def\Ext{\mathrm{Ext}}
 \def\For{\mathrm{For}}
 \def\gr{\mathrm{gr}}
 \def\Hecke{\mathscr{H}}
 \def\id{\mathrm{id}}
 \def\MHM{\mathrm{MHM}}
 \def\Nilcone{\mathscr{N}}
 \def\pt{\mathrm{pt}}
 \def\qq{\mathbf{q}}
 \def\rat{\mathrm{rat}}
 \def\Spec{\mathrm{Spec}}
 \newcommand{\mapright}[1]{\xrightarrow{#1}}
 \newcommand{\mapleft}[1]{\xleftarrow{#1}}
 \newcommand{\const}[1]{\underline{#1}}
 \newcommand{\qtimes}[1]{\mathop{\times}\limits^{#1}}
 \newcommand{\ttimes}{\mathop{\widetilde{\boxtimes}}}
\renewcommand{\@makefnmark}{\mbox{\textsuperscript{}}}
\title{A remark on braid group actions on coherent sheaves}
\author{R. Virk}
\begin{document}
\maketitle
\section{Introduction and notation}
Examples of braid group actions on derived categories of coherent sheaves are abundant in the literature. Significant part of the interest in these stems from the relation with homological mirror symmetry (see \cite{ST}).
The purpose of this note is to give a construction of braid group actions on coherent sheaves (algebraic) via actions on derived categories of \emph{constructible sheaves} (topological). 
%

In \S\ref{s:convolution} we review the construction of the main player (constructible side): $D^b_m(B\backslash G/B)$, the Borel equivariant derived category of mixed Hodge modules on the flag variety $G/B$ associated to a reductive group $G$. The key points here are Prop. \ref{braidrels} (the braid relations) and Thm. \ref{invertible} (invertibility of the objects giving the braid relations). The contents of this section can be found in various forms in the literature, for instance see \cite{Sp82}. 

Underlying a mixed Hodge module $M$ on a smooth variety $X$ is the structure of a filtered $\cD$-module. Taking the associated graded one produces a $\CC^*$-equivariant coherent sheaf $\tilde\gr M$ on the cotangent bundle $T^*X$. This brings us to the main result Thm. \ref{gammamonoidal}, which exploits $\tilde\gr$ to obtain a monoidal functor from $D^b_m(B\backslash G/B)$ to an appropriate category of coherent sheaves $\tilde\Hecke$ on the Steinberg variety. In view of Prop. \ref{braidrels} and Thm. \ref{invertible}, this realizes our goal of obtaining braid group actions on coherent sheaves. Via the standard formalism of Fourier-Mukai kernels, the category $\tilde\Hecke$ of \S\ref{s:coherent} acts on auxilliary categories of coherent sheaves. Hence, one obtains braid group actions on these too.

The idea to exploit $\tilde\gr$ in this fashion comes from T. Tanisaki's beautiful paper \cite{T}. This theme was also explored by I. Grojnowski \cite{Groj}. However, both I. Grojnowski and T. Tanisaki work at the level of Grothendieck groups, we insist on working at the categorical level. 
Regardless, I emphasize that all the key ideas are contained in \cite{T}.
Furthermore, the key technical result (Thm. \ref{laumoncorr}) that is used to prove Thm. \ref{gammamonoidal} is due to G. Laumon \cite{La2}.

A variant of Thm. \ref{gammamonoidal} has also been obtained by R. Bezrukavnikov and S. Riche \cite{BRmhm}. Further, R. Bezrukavnikov and S. Riche were certainly aware of such a result long before this note was conceived (see \cite{BR}). Thus, experts in geometric representation theory have known that such a result must hold for a long time. Certainly V. Ginzburg (see \cite{G}) I. Grojnowski (see \cite{Groj}), M. Kashiwara (see \cite{KaT}), D. Kazhdan (see \cite{KL}), G. Lusztig (see \cite{KL}), R. Rouquier (see \cite{Ro2}), and of course T. Tanisaki (see \cite{T}) must have also known. Undoubtedly this list is woefully incomplete. I request the reader's forgiveness for my ignorance in this matter. 
\subsection{Conventions regarding varieties}
Throughout `variety' = `separated reduced scheme of finite type over $\Spec(\CC)$'. A variety can and will be identified with its set of geometric points. If $X$ is a variety, set
$d_X = \mathrm{dim_{\CC}}(X)$. If $Y$ is another variety, set $d_{X/Y} = d_X - d_Y$.

\subsection{Mixed Hodge modules}
Write $\MHM(X)$ for the abelian category of mixed Hodge modules on $X$, and $D^b_m(X)$ for its bounded derived category. The constant (mixed Hodge) sheaf in $D^b_m(X)$ is denoted $\const{X}$. The Tate twist is denoted by $(1)$. 
%
%

Let $G$ be a linear algebraic group acting on $X$ (action will always mean left action). Write $D^b_m(G\backslash X)$ for the $G$-equivariant derived category (in the sense of \cite{BL94}) of mixed Hodge modules on $X$. Write $\For\colon D^b_m(G\backslash X) \to D^b_m(X)$ for the forgetful functor. 
\subsection{Coherent sheaves}
For a variety $X$ we write $\cO_X$ for its structure sheaf, $D^b(\cO_X)$ for the (bounded) derived category of $\cO_X$-coherent sheaves and $\Coh(\cO_X)\subset D^b(\cO_X)$ for the abelian subcategory of coherent sheaves. If an algebraic group $G$ acts on $X$, we write $D^G(\cO_X)$ for the (bounded) derived category of $G$-equivariant $\cO_X$-coherent sheaves and $\Coh^G(\cO_X)\subset D^G(\cO_X)$ for the abelian subcategory of $G$-equivariant coherent sheaves.

Given a morphism of varieties $f\colon X\to Y$, when dealing with coherent sheaves, we write $f^{-1}$ for the ordinary pullback of sheaves, so that the pullback
$f^*\colon D^b(\cO_Y)\to D^b(\cO_X)$ is given by 
$f^*M=\cO_Y \otimes_{f^{-1}\cO_X}^L f^{-1}M$.
For a smooth variety $X$ let $\Omega_X$ be the cotangent sheaf on $X$ and set
$\omega_X = \bigwedge^{d_X} \Omega_X$.
If $f\colon X\to Y$ is a morphism of smooth varieties, 
set $\omega_{X/Y} = \omega_X \otimes_{f^{-1}\cO_Y}f^{-1}\omega_Y^{ -1}$. 
\subsection{$\cD$-modules}
For a smooth variety $X$ we write $\cD_X$ for the sheaf of differential operators on $X$. A $\cD_X$-module will always mean a coherent left $\cD_X$-module that is quasi-coherent as an $\cO_X$-module.
\subsection{Cotangent bundles}\label{ss:sign}
We write $\pi_X\colon T^*X\to X$ for the cotangent bundle to a smooth variety $X$. We identify $T^*(X\times X)$ with $T^*X\times T^*X$. However, we do this via the usual isomorphism $T^*(X\times X)\simeq T^*X\times T^*X$ composed with the antipode map on the right. 
This is dictated by requiring that the conormal bundle to the diagonal in $X\times X$ be identified with the diagonal in $T^*X\times T^*X$.
\section{Convolution}\label{s:convolution}
Let $G$ be a connected reductive group. Fix a Borel subgroup $B\subseteq G$. Then $B$ acts on $G$ via $b\cdot g = gb^{-1}$. The quotient under this action is the flag variety $G/B$. Clearly, $B$ acts on $G/B$ on the left, and we may form the equivariant derived category $D^b_m(B\backslash G/B)$.

Let $\tilde q\colon G \times G/B \to G/B$ be the projection on $G$ composed with the quotient map $G\to G/B$. Let $p\colon G\times G/B\to G/B$ denote projection on $G/B$. Let $q\colon G\times G/B\to G\qtimes{B}G/B$ be the quotient map. Let
$m\colon G\qtimes{B}G/B\to G/B$ denote the map induced by the action of $G$ on $G/B$.
\[ \xymatrixrowsep{1mm}\xymatrix{
& G\times G/B \ar[r]^-{q} \ar@/^0pc/[ld]_-{\tilde{q}} \ar@/^0pc/[rd]^-{p}& G\qtimes{B} G/B \ar[r]^-m & G/B \\
G/B & & G/B
}\]
The \emph{convolution} bifunctor $-\conv - \colon D^b_m(B\backslash G/B)\times D^b_m(B\backslash G/B) \to D^b_m(B\backslash G/B)$ is defined by the formula 
$M\conv N = m_!(M\ttimes N)$,
where $M\ttimes N$ denotes the descent of $\tilde{q}^*M\otimes p^*N[d_B]$ to $D^b_m(B\backslash G\qtimes{B} G/B)$.
This is an associative operation and endows $D^b_m(B\backslash G/B)$ with a monoidal structure.
Convolution adds weights and commutes with Verdier duality, since $m$ is proper.

\subsection{Another description of convolution}\label{ss:altdesc}
The group $G$ acts on $G/B\times G/B$ diagonally, and the map
$G\times G/B \to G/B\times G/B, \quad (g,x) \mapsto (\tilde{q}(g), g\cdot x)$
induces a $G$-equivariant isomorphism $G/B\times G/B \mapright{\sim} G\qtimes{B} G/B$. Under this isomorphism $m\colon G\qtimes{B} G/B \to G/B$ corresponds to projection on the second factor $p_2\colon G/B\times G/B\to G/B$. Define $i\colon G/B \to G/B\times G/B$, $x\mapsto (\tilde{q}(1), x)$. Using equivariant descent (see \cite[Lemma 1.4]{MV}) we infer 
\begin{equation}\label{iso} i^*[-d_{G/B}]\colon D^b_m(G\backslash (G/B\times G/B)) \mapright{\sim} D^b_m(B\backslash G/B) \end{equation}
is a t-exact equivalence. If $M\in D^b_m(G\backslash (G/B\times G/B))$ is pure of weight $n$, then $i^*M[-d_{G/B}]$ is pure of weight $-d_{G/B}$.

Let $r = \id_{G/B}\times \Delta \times \id_{G/B}$,
where $\Delta\colon G/B\to G/B\times G/B$ is the diagonal embedding.
Define a monoidal structure $ - \conv -$ on $D^b_m(G\backslash(G/B\times G/B))$
by the formula
\begin{equation}\label{ordconv} 
M\conv N = p_{13!}r^*(M\boxtimes N)[-d_{G/B}],
\end{equation}
where $p_{13}\colon G/B\times G/B\times G/B \to G/B\times G/B$ denotes projection on the first and third factor.
A diagram chase (omitted) shows that the equivalence \eqref{iso} is monoidal. We will constantly go back and forth between these two descriptions.

\subsection{Braid relations}
Fix a maximal torus $T\subseteq B$. Let $W=N_G(T)/T$ be the Weyl group. Write $\ell\colon W \to \ZZ_{\geq 0}$ for the length function. The $B$-orbits in $G/B$ are indexed by $W$. Further, writing $X_w$ for the orbit corresponding to $w\in W$, we have
$X_w \simeq \AA^{\ell(w)}$.
For each $w\in W$, let $i_w\colon X_w \hookrightarrow G/B$ be the inclusion map. Set
\[
\TT_w = i_{w!}\const{X_w} \quad\mbox{and}\quad
\CC_w = IC(X_w, \const{X_w})[-\ell(w)].\]
Then $\TT_e$ is the unit for convolution and will be denoted by $\11$. Note that both $\TT_w[\ell(w)]$ and $\CC_w[\ell(w)]$ are in $\MHM(B\backslash G/B)$ for all $w\in W$.

Write $Y_w$ for the image of $G\qtimes{B} X_w$ under the isomorphism \eqref{iso}. Then the $Y_w$, $w\in W$, are the $G$-orbits in $G/B\times G/B$.
Furthermore,
$\TT_w = i^*j_{w!}\const{Y_w}$ and $\CC_w=i^*IC(Y_w,\const{Y_w})[-\ell(w)]$, 
where $j_w\colon Y_w\hookrightarrow G/B\times G/B$ is the inclusion map.

\begin{prop}\label{braidrels}
Let $w,w'\in W$. If $\ell(ww')=\ell(w)+\ell(w')$, then
$
\TT_w\conv\TT_{w'} = \TT_{ww'}$ \end{prop}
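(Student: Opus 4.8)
The plan is to work on the "two-sided" side $D^b_m(G\backslash(G/B\times G/B))$ using the orbits $Y_w$ and the description $\TT_w = i^*j_{w!}\const{Y_w}$; since the equivalence \eqref{iso} is monoidal, it suffices to prove $j_{w!}\const{Y_w}\conv j_{w'!}\const{Y_{w'}} = j_{(ww')!}\const{Y_{ww'}}$ in $D^b_m(G\backslash(G/B\times G/B))$ for $\ell(ww')=\ell(w)+\ell(w')$, where $\conv$ is given by \eqref{ordconv}. First I would set up the fiber product $Z = (G/B\times G/B\times G/B)$ with the correspondence $r = \id\times\Delta\times\id$ and $p_{13}$, and restrict to the locus $r^{-1}(Y_w\times Y_{w'})$, which is identified with the incidence variety $\{(x,y,z)\mid (x,y)\in Y_w,\ (y,z)\in Y_{w'}\}$. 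The key geometric input is the standard fact (valid precisely when $\ell(ww')=\ell(w)+\ell(w')$) that projection to the outer factors maps this incidence variety \emph{isomorphically} onto $Y_{ww'}$: given $(x,z)\in Y_{ww'}$ there is a unique intermediate flag $y$ with $(x,y)\in Y_w$ and $(y,z)\in Y_{w'}$. This is most cleanly seen by translating to $G$: writing $Y_w$ as the $G$-orbit of $(eB,wB)$, one is asking that $B\backslash BwB\cdot w'B = \{w(w'B)\}$, i.e. that $BwB\cdot w'B$ is a single $B$-coset, which is exactly the multiplicativity $BwB\cdot Bw'B = Bww'B$ of the Bruhat decomposition in the length-additive case.

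Granting this, the computation is formal. Let $V\subseteq G/B\times G/B\times G/B$ be the preimage $r^{-1}(Y_w\times Y_{w'})$ with open inclusion $a\colon V\hookrightarrow r^{-1}(\overline{Y_w}\times\overline{Y_{w'}})$, and let $b\colon V\to Y_{ww'}$ be the map induced by $p_{13}$, which by the previous paragraph is an isomorphism. By proper (or base-change) compatibilities, $p_{13!}r^*(j_{w!}\const{Y_w}\boxtimes j_{w'!}\const{Y_{w'}})$ is computed by first pulling the $!$-extensions back along $r$ — which, since $r$ is an lci morphism of smooth varieties and the relevant pullback is of a constant sheaf extended by zero, yields $a_!\const{V}$ up to a shift by $2(d_{G/B} - \dim(\text{fiber of }r))$ accounting for the codimension — and then pushing forward along $p_{13}$. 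Because $b$ is an isomorphism and the composite $V\to Y_{ww'}\hookrightarrow G/B\times G/B$ is the restriction of $p_{13}$, the pushforward $p_{13!}a_!\const{V}$ equals $j_{(ww')!}\const{Y_{ww'}}$ up to the bookkeeping shift. Finally one checks that the shift in \eqref{ordconv}, namely $[-d_{G/B}]$, exactly cancels the accumulated shifts: the dimension count is $\dim V = \ell(w)+\ell(w')+d_{G/B} = \ell(ww')+d_{G/B} = \dim Y_{ww'}+d_{G/B}$, and $r^*$ on a smooth codimension-$d_{G/B}$ subvariety contributes the compensating shift, so the $[-d_{G/B}]$ in the definition of $\conv$ restores the normalization $\TT_{ww'}=i_{ww'!}\const{X_{ww'}}$ with no residual shift or twist. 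Applying $i^*[-d_{G/B}]$ transports this back to $D^b_m(B\backslash G/B)$ and gives $\TT_w\conv\TT_{w'}=\TT_{ww'}$.

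The main obstacle is bookkeeping rather than conceptual: one must be careful that the closure $\overline{Y_w}$ is \emph{not} smooth, so the pullback $r^*$ must be analyzed on the open stratum $V$ only (where everything is smooth), and then the extension-by-zero and base change must be arranged so that no correction terms from the boundary $\overline{Y_w}\setminus Y_w$ intervene — this works because $\const{Y_w}$ is $!$-extended, so its $r$-pullback is supported on $\overline{V}$ and restricting to $V$ loses nothing. Tracking the shifts and the (trivial, since everything here is a constant sheaf) Tate twists through $r^*$, $p_{13!}$, and the normalization $[-d_{G/B}]$ is the only real labor, and the decisive point where $\ell(ww')=\ell(w)+\ell(w')$ enters is precisely the claim that $b\colon V\to Y_{ww'}$ is an isomorphism (equivalently, that the convolution morphism is small with a single relevant stratum, indeed here an isomorphism onto its image).
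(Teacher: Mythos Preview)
Your approach is essentially the paper's: work on the $G\backslash(G/B\times G/B)$ side, use that the ``incidence variety'' $V=\{(x,y,z)\mid (x,y)\in Y_w,\ (y,z)\in Y_{w'}\}$ is precisely the fibre product $Y_w\times_{G/B}Y_{w'}$, invoke the Bruhat fact that $p_{13}$ restricts to an isomorphism $V\simeq Y_{ww'}$ when lengths add, and conclude by proper base change. That is exactly what the paper does in one sentence.

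Your shift bookkeeping, however, is tangled in a way that happens to be self-correcting. The reduction ``it suffices to prove $j_{w!}\const{Y_w}\conv j_{w'!}\const{Y_{w'}} = j_{(ww')!}\const{Y_{ww'}}$'' is off by a shift: under the monoidal equivalence $i^*[-d_{G/B}]$ the object corresponding to $\TT_w$ is $j_{w!}\const{Y_w}[d_{G/B}]$, not $j_{w!}\const{Y_w}$, and convolution does not commute with stripping equal shifts from both sides (since $(A[n])\conv(B[n])=(A\conv B)[2n]$). You then compensate by asserting that $r^*$ introduces a shift ``accounting for the codimension''. It does not: $r=\id\times\Delta\times\id$ is a closed immersion, and $r^*$ applied to $(j_w\times j_{w'})_!\const{Y_w\times Y_{w'}}$ is simply $a_!\const{V}$ by ordinary base change for $(-)_!$ along a locally closed immersion---no shift, no twist, no need to invoke that $r$ is lci. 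The honest computation is
\[
\bigl(j_{w!}\const{Y_w}[d_{G/B}]\bigr)\conv\bigl(j_{w'!}\const{Y_{w'}}[d_{G/B}]\bigr)
= p_{13!}\,a_!\const{V}\,[2d_{G/B}-d_{G/B}]
= j_{ww'!}\const{Y_{ww'}}[d_{G/B}],
\]
with the only shifts coming from the objects themselves and the $[-d_{G/B}]$ built into \eqref{ordconv}. Your worry about the singular boundary $\overline{Y_w}\setminus Y_w$ is likewise unnecessary: since everything is $!$-extended from the open strata, base change never sees the boundary. Once the phantom shift is removed, your argument is clean and coincides with the paper's.
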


\begin{proof}
If $\ell(ww')=\ell(w)+\ell(w')$, then $Y_{ww'} = Y_w \times_{G/B} Y_{w'}$, where the fibre product is over the projection maps $Y_w \to G/B$ and $Y_{w'}\to G/B$ on the first and second factor respectively. Now an application of proper base change and the description of convolution on $D^b_m(G\backslash (G/B\times G/B))$ yields the result.
\end{proof}

\begin{lemma}\label{affine}The functor $\TT_w[\ell(w)]\conv -$ is left t-exact, and $(\DD\TT_w)[-\ell(w)]\conv - $ is right t-exact.
\end{lemma}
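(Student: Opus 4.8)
The plan is to realise convolution with $\TT_w$ as a pull–push along the correspondence $G/B\xleftarrow{\,a_w\,}Y_w\xrightarrow{\,b_w\,}G/B$, where $a_w=p_1|_{Y_w}$ and $b_w=p_2|_{Y_w}$ are the restrictions to the $G$-orbit $Y_w\subseteq G/B\times G/B$ of the two projections, and then to read off left $t$-exactness from the geometry of these two maps. First I would transport the statement to $D^b_m(G\backslash(G/B\times G/B))$ along the equivalence \eqref{iso}, which is harmless since \eqref{iso} is monoidal and $t$-exact. Under \eqref{iso} the object $\TT_w$ corresponds to the $G$-equivariant sheaf $j_{w!}\const{Y_w}[d_{G/B}]$; plugging this kernel into the formula \eqref{ordconv} and simplifying by base change and the projection formula --- the shifts $[d_{G/B}]$ and $[-d_{G/B}]$ cancelling --- one obtains that convolution with $\TT_w$ is the functor $(a_w\times\id_{G/B})_!\circ(b_w\times\id_{G/B})^*$, so that
\[ \TT_w[\ell(w)]\conv -\;=\;(a_w\times\id_{G/B})_!\circ\bigl((b_w\times\id_{G/B})^*[\ell(w)]\bigr). \]

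Next I would check that $a_w$ and $b_w$ are both Zariski-locally trivial $\AA^{\ell(w)}$-bundles over $G/B$. For $a_w$ this follows from the identification $Y_w\simeq G\qtimes{B}X_w$, under which $a_w$ is the fibre bundle with fibre $X_w\simeq\AA^{\ell(w)}$ associated to the $B$-torsor $G\to G/B$ (Zariski-locally trivial because $B$ is connected solvable); for $b_w$ one uses instead the identification $Y_w\simeq G\qtimes{B}X_{w^{-1}}$ (the fibre of $b_w$ over $eB$ is $X_{w^{-1}}$) together with $\ell(w^{-1})=\ell(w)$. In particular $a_w\times\id_{G/B}$ is an affine morphism, so $(a_w\times\id_{G/B})_!$ is left $t$-exact by Artin vanishing, while $b_w\times\id_{G/B}$ is smooth of relative dimension $\ell(w)$, so $(b_w\times\id_{G/B})^*[\ell(w)]$ is $t$-exact; composing these, $\TT_w[\ell(w)]\conv -$ is left $t$-exact. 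The second assertion is then formal: convolution commutes with Verdier duality, so for perverse $M$ one has $(\DD\TT_w)[-\ell(w)]\conv M\simeq\DD\bigl(\TT_w[\ell(w)]\conv\DD M\bigr)$, and $\DD$ turns the left $t$-exactness just proved into right $t$-exactness. (One could instead reduce at the outset to the case of a simple reflection via Prop.~\ref{braidrels}, but no reduction is really needed.)

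The part that will take the most care is the identification of $\TT_w[\ell(w)]\conv -$ with the pull–push functor above: bookkeeping the shifts, and making sure it is the lower-shriek pushforward $(a_w\times\id_{G/B})_!$ that occurs, since for an affine morphism $f$ it is $f_!$, not $f_*$, that is left $t$-exact. The affineness of $a_w$ is really the whole content here --- it is the ``affine'' of the lemma's name --- and once that is in hand the remainder is just the standard interplay of Artin vanishing with smooth pullback.
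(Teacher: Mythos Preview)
Your argument is correct and is essentially the paper's own: identify $\TT_w\conv-$ as a pull--push where the shifted pullback is $t$-exact and the shriek pushforward is along an affine morphism, then invoke Verdier duality for the second assertion. The only difference is that the paper stays in the model $D^b_m(B\backslash G/B)$ and uses the map $m_w\colon BwB\qtimes{B}G/B\to G/B$ (whose affineness is immediate from the identification $BwB\qtimes{B}G/B\simeq X_w\times G/B$ with $m_w$ the second projection), rather than transporting to $D^b_m(G\backslash(G/B\times G/B))$ and arguing Zariski-local triviality of $a_w$ as you do.
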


\begin{proof}It suffices to show $\TT_w[\ell(w)] \conv -$ is left t-exact, since Verdier duality commutes with convolution. Consider the diagram
\[ \xymatrixrowsep{1mm}\xymatrix{
& BwB\times G/B \ar[r]^-{q_w} \ar@/^0pc/[ld]_-{\tilde q_w} \ar@/^0pc/[rd]^-{p_w}& BwB \qtimes{B} G/B \ar[r]^-{m_w} & G/B \\
X_w & & G/B
}\]
where $\tilde q_w$ is the the evident quotient map on the first factor followed by projection, $p_w\colon BwB\times G/B\to G/B$ is projection on the second factor, $q_w$ is the restriction of $q$, and $m_w$ is the restriction of $m$. Then 
$\TT_w \conv - = m_{w!}(\const{X_w} \ttimes - )$, where 
$\const{X_w}\ttimes -$ is the descent of $\tilde q^*_w\const{X_w}\otimes p_w^*(-)[d_B]$ to $D^b_m(B\backslash BwB\qtimes{B} G/B)$.
Now $\const{X_w}[\ell(w)]\ttimes -$ is t-exact. This implies the result, since $m_w$ is affine.
\end{proof}

\begin{prop}\label{convcs}Let $s\in W$ be a simple reflection. Let $G/P_s$ be the partial flag variety corresponding to $s$. Let $\pi_s\colon G/B\to G/P_s$ be the projection map. Then
$\CC_s\conv M = \pi_s^*\pi_{s*} M$, for all $M\in\Hecke$.
\end{prop}

\begin{proof}
The closure $\overline{Y_s}$ of $Y_s$ in $G/B\times G/B$ is smooth (it is isomorphic to $\PP^1\times\PP^1$). Hence, 
$IC(Y_s, \const{Y_s}) = \tilde i_!\const{\overline{Y_s}}[d_{\overline{Y_s}}]$,
where $\tilde i \colon \overline{Y_s}\hookrightarrow G/B\times G/B$ is the inclusion. The map $\pi_s$ is proper. Using proper base change we deduce
$(j_{s!*}\const{Y_s}[d_{Y_s}-1])\conv N = (\id_{G/B}\times \pi_s)^*(\id_{G/B}\times \pi_s)_*N$
for all $N\in D^b_m(G\backslash (G/B\times G/B))$.
Further, if $\tilde M \in D^b_m(G\backslash (G/B\times G/B))$ is such that $i^*\tilde M [-d_{G/B}] = M$, then
$i^*(\id_{G/B}\times \pi_s)^*(\id_{G/B}\times \pi_s)_*\tilde M [-d_{G/B}] = \pi_s^*\pi_{s*} M$.
\end{proof}

\begin{cor}Let $s\in W$ be a simple reflection. Then
$\CC_s \conv \CC_s = \CC_s \oplus \CC_s[-2](-1)$.
\end{cor}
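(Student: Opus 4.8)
The plan is to apply Proposition~\ref{convcs} with $M = \CC_s$ and then compute $\pi_s^*\pi_{s*}\CC_s$ directly. First I would note that $\CC_s = IC(X_s, \const{X_s})[-1]$, and since $X_s$ together with the closed point $X_e$ forms a single fibre $\mathbb{P}^1$ of the map $\pi_s\colon G/B \to G/P_s$, the object $\CC_s[1] = IC(X_s,\const{X_s})$ is (up to shift) the pullback along $\pi_s$ of the constant sheaf on $G/P_s$: concretely, $\CC_s = \pi_s^*\const{G/P_s}[-1]$, where I use that $\pi_s$ is a smooth proper morphism with one-dimensional fibres $\mathbb{P}^1$, so $\pi_s^*$ of the constant sheaf is already an $IC$ sheaf of the right weight on $G/B$ (pure of the correct weight, by the conventions in \S\ref{ss:altdesc}).

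Next I would compute $\pi_{s*}\CC_s = \pi_{s*}\pi_s^*\const{G/P_s}[-1]$ using the projection formula: this equals $\const{G/P_s}\otimes \pi_{s*}\const{G/B}[-1]$. Since $\pi_s$ is a $\mathbb{P}^1$-bundle, $\pi_{s*}\const{G/B}[-1] = \const{G/P_s}[-1] \oplus \const{G/P_s}[-3](-1)$ (the cohomology of $\mathbb{P}^1$ contributes in degrees $0$ and $2$, with the appropriate Tate twist on the top class). Therefore $\pi_{s*}\CC_s = \const{G/P_s}[-1]\oplus\const{G/P_s}[-3](-1)$. Pulling back again and applying Proposition~\ref{convcs}:
\[
\CC_s\conv\CC_s = \pi_s^*\pi_{s*}\CC_s = \pi_s^*\const{G/P_s}[-1] \oplus \pi_s^*\const{G/P_s}[-3](-1) = \CC_s \oplus \CC_s[-2](-1),
\]
which is the claim.

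The main obstacle — really the only nontrivial point — is verifying the decomposition of $\pi_{s*}\const{G/B}$ in the mixed Hodge module category with the correct Tate twist, i.e.\ that it is not merely the underlying topological statement but holds with weights. This follows because $\pi_s$ is smooth and proper so $\pi_{s*}\const{G/B}$ is pure, hence splits (by the decomposition theorem) into a direct sum of shifted pure objects; identifying the summands as $\const{G/P_s}$ in degree $0$ and $\const{G/P_s}(-1)$ in degree $2$ is the standard computation of the cohomology of $\mathbb{P}^1$ as a Hodge structure, $H^0(\mathbb{P}^1) = \const{\pt}$ and $H^2(\mathbb{P}^1) = \const{\pt}(-1)$. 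One should also double-check the bookkeeping of shifts coming from the $[-\ell(s)] = [-1]$ normalization in the definition of $\CC_s$ and from the $[-d_{G/B}]$ in \eqref{iso}, but these are routine.
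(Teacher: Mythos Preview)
Your overall strategy---apply Proposition~\ref{convcs} with $M=\CC_s$ and reduce to the cohomology of $\PP^1$---is precisely what the paper intends (the corollary is stated without proof, as an immediate consequence of Proposition~\ref{convcs}).

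However, the identification $\CC_s = \pi_s^*\const{G/P_s}[-1]$ is false, and this is a genuine error, not just bookkeeping. The pullback $\pi_s^*\const{G/P_s}$ is the constant sheaf $\const{G/B}$ on \emph{all} of $G/B$, whereas $\CC_s$ is supported only on $\overline{X_s}\simeq\PP^1$, which is the single fibre of $\pi_s$ over the base point $eP_s\in G/P_s$. Consequently your intermediate computation $\pi_{s*}\CC_s = \const{G/P_s}[-1]\oplus\const{G/P_s}[-3](-1)$ is also wrong: $\pi_{s*}\CC_s$ is a skyscraper at $eP_s$, not a constant sheaf on $G/P_s$. You only arrive at the right final answer because you use the same false identification twice, once in each direction.

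The fix is easy. Let $i_e\colon\{eP_s\}\hookrightarrow G/P_s$ be the inclusion of the base point. Since $\overline{X_s}$ is smooth, $\CC_s$ is the (extension by zero of the) constant sheaf on $\overline{X_s}=\pi_s^{-1}(eP_s)$, so by base change $\CC_s = \pi_s^*\,i_{e!}\const{\pt}$. Then
\[
\pi_{s*}\CC_s \;=\; i_{e!}\,H^*(\PP^1,\const{\PP^1}) \;=\; i_{e!}\const{\pt}\ \oplus\ i_{e!}\const{\pt}[-2](-1),
\]
and applying $\pi_s^*$ gives $\CC_s\oplus\CC_s[-2](-1)$. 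In other words, replace $\const{G/P_s}$ by the skyscraper $i_{e!}\const{\pt}$ throughout; with that correction your argument, including the purity/decomposition discussion for $H^*(\PP^1)$, goes through unchanged.
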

Let
$\II\colon D^b_m(B\backslash G/B)\mapright{\sim}D^b_m(B\backslash G/B)$
denote the auto-equivalence induced by the automorphism of $G/B\times G/B$ that switches the factors. Then
$\II (M\conv N) = \II N \conv \II M$,
for all $M,N\in D^b_m(B\backslash G/B)$. Further, if $s\in W$ is a simple reflection, then $\II \TT_s = \TT_s$. Consequently,
$\II \TT_w = \TT_{w^{-1}}$ and $\II \CC_w = \CC_{w^{-1}}$
for all $w\in W$.
\begin{prop}\label{calcprop}Let $s\in W$ be a simple reflection. Then
\begin{enumerate}
\item $\CC_s\conv\TT_s = \CC_s[-2](-1)= \TT_s\conv \CC_s$;
\item $\TT_s \conv \DD\TT_s = \11 =  \DD\TT_s\conv \TT_s$.
\end{enumerate}
\end{prop}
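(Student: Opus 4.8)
The plan is to compute both parts by reducing to rank one, i.e.\ to the minimal parabolic $P_s$, and then to use Proposition \ref{convcs} together with the structure of the standard and costandard objects $\TT_s$ and $\DD\TT_s$. For part (i), I would first note that $\TT_s[1]$ and $\CC_s[1]$ lie in $\MHM(B\backslash G/B)$ and that $\CC_s = \pi_s^*\pi_{s*}\11$ up to the appropriate shift and twist (this is the $M = \11$ case of Proposition \ref{convcs}, since $\pi_s^*\pi_{s*}\11$ is the pushforward of the constant sheaf on $\overline{Y_s}\cong\PP^1\times\PP^1$). Then $\CC_s\conv\TT_s = \pi_s^*\pi_{s*}\TT_s$ by Proposition \ref{convcs}. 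So the computation comes down to identifying $\pi_{s*}\TT_s$: on the $\PP^1$-bundle $\pi_s\colon G/B\to G/P_s$, the object $\TT_s = i_{s!}\const{X_s}$ is the extension by zero of the constant sheaf on the open affine cell $X_s\cong\AA^1$ in a fibre, and $\pi_{s*}$ of this along $\PP^1$ is (by a direct cohomology computation on $\PP^1$ with the point removed, or by the distinguished triangle relating $\TT_s$, $\CC_s[\ell(s)]$ and $\11$) concentrated so that $\CC_s\conv\TT_s = \CC_s[-2](-1)$. The equality $\TT_s\conv\CC_s = \CC_s[-2](-1)$ follows by applying $\II$ and using $\II\TT_s = \TT_s$, $\II\CC_s = \CC_s$, together with $\II(M\conv N) = \II N\conv\II M$.

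For part (ii), the key input is the standard rank-one triangle: in $D^b_m(B\backslash G/B)$ there is a distinguished triangle $\TT_s[1] \to \CC_s[1] \to \11 \to$ (equivalently, the cell decomposition $X_s \sqcup X_e$ of $\overline{X_s}\cong\PP^1$ gives a triangle $i_{s!}\const{X_s} \to IC \to i_{e*}\const{X_e} \to$, which is $\TT_s \to \CC_s[-1] \to \11 \to$ after the shift conventions). Dually, $\DD\TT_s$ fits in $\11 \to \CC_s[-1] \to \DD\TT_s[1] \to$, and $\DD\TT_s = i_{s*}\const{X_s}(\ell(s))$ up to shift. I would then convolve the triangle for $\TT_s$ on the right with $\DD\TT_s$: since convolution is exact (triangulated) in each variable, I get a triangle $\TT_s\conv\DD\TT_s \to \CC_s\conv\DD\TT_s[\text{shift}] \to \DD\TT_s \to$. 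By part (i) applied to $\DD\TT_s$ — or rather by the Verdier-dual form of part (i), namely $\CC_s\conv\DD\TT_s = \CC_s[2](1)$, which follows from (i) since $\DD\CC_s = \CC_s$ and $\DD$ commutes with convolution — the middle term is identified, and chasing the triangle (using that $\CC_s\conv\DD\TT_s$ and $\DD\TT_s$ differ by exactly the shift/twist that appears in the triangle $\11\to\CC_s[-1]\to\DD\TT_s[1]\to$ convolved appropriately) forces $\TT_s\conv\DD\TT_s = \11$. The other equality $\DD\TT_s\conv\TT_s = \11$ is obtained either symmetrically, or by applying $\II$ and Verdier duality to the first.

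The main obstacle I anticipate is bookkeeping of the shifts and Tate twists — the conventions here put $\TT_w$ and $\CC_w$ in perverse degree $-\ell(w)$, convolution adds weights, and $\II$ and $\DD$ interact with all of this — so the real work is making sure the triangle $\TT_s \to \CC_s[-1] \to \11 \to$ and its dual are normalized correctly and that the cancellation in the octahedral/triangle chase for (ii) produces $\11$ on the nose rather than a nonzero shift or twist. A clean way to control this is to pass through the equivalence \eqref{iso} to $D^b_m(G\backslash(G/B\times G/B))$ and do everything on $\PP^1\times\PP^1$, where $Y_s$ is open, $Y_e$ is the diagonal, $\overline{Y_s} = G/B\times G/B$ in rank one, and all the objects are pushforwards of shifted constant sheaves whose cohomology on $\PP^1$ (and on $\PP^1$ minus a point) is completely explicit; the Tate twists then come out of the Lefschetz/hard-Lefschetz normalization of $IC(\PP^1)$ automatically.
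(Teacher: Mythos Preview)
Your plan for (i) matches the paper's: apply Prop.~\ref{convcs} to get $\CC_s\conv\TT_s = \pi_s^*\pi_{s*}\TT_s$, identify this as $\CC_s[-2](-1)$, and then use the involution $\II$ for the second equality.

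For (ii) your strategy --- convolve a distinguished triangle linking $\TT_s$, $\CC_s$, $\11$ with $\DD\TT_s$ and use (i) to simplify the middle term --- is again exactly the paper's. The paper convolves $\DD\TT_s \to \11[1] \to \CC_s[3](1)\leadsto$ with $\TT_s$ to obtain $\TT_s\conv\DD\TT_s \to \TT_s[1] \to \CC_s[1]\leadsto$, and then compares with the open--closed sequence $0\to\11\to\TT_s[1]\to\CC_s[1]\to 0$.

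There is, however, a genuine gap in your argument for (ii): you never invoke Lemma~\ref{affine}, and the paper uses it in an essential way. Left t-exactness of $\TT_s[1]\conv-$ together with right t-exactness of $-\conv\DD\TT_s[-1]$ pins $\TT_s\conv\DD\TT_s = (\TT_s[1])\conv(\DD\TT_s[-1])$ in perverse degree $0$. Only then does the convolved triangle become a short exact sequence in the abelian category $\MHM(B\backslash G/B)$, and only then is $\TT_s\conv\DD\TT_s$ identified with the kernel $\11$ of the surjection $\TT_s[1]\twoheadrightarrow\CC_s[1]$. Your ``chasing the triangle'' step, as written, does not close: two distinguished triangles with isomorphic second and third terms need \emph{not} have isomorphic first terms --- cones are determined only up to non-canonical isomorphism, and you give no argument that the connecting morphisms agree. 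This is not merely the shift/twist bookkeeping you flag as the obstacle; it is the substantive step, and the affineness of $m_w$ (Lemma~\ref{affine}) is precisely what resolves it. Your fallback suggestion of computing everything directly on $\PP^1$ could be made to work, but that is a different argument from the triangle chase you outline.
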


\begin{proof}Prop. \ref{convcs} gives the first equality in (i).
Applying the involution $\II$ gives the second equality in (i). 
Convolve the distinguished triangle $\DD\TT_s\to \11[1] \to \CC_s[3](1)\leadsto$ with $\TT_s$, and use (i) along with Lemma \ref{affine} to get a short exact sequence $0\to \TT_s \conv \DD\TT_s \to \TT_s[1] \to \CC_s[1] \to 0$ in $\MHM(B\backslash G/B)$. This implies $\TT_s\conv \DD\TT_s = \11$. Verdier duality yields $\DD\TT_s \conv \TT_s = \11$.
\end{proof}

\begin{thm}\label{invertible}Each $\TT_w$, $w\in W$, is invertible under convolution.
\end{thm}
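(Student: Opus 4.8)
The plan is to reduce the general statement to the case of a simple reflection, which is exactly Proposition \ref{calcprop}(ii). First I would pick a reduced expression $w = s_1 s_2 \cdots s_k$ for $w \in W$ in terms of simple reflections $s_i$, so that $\ell(w) = k$. Since each $s_i$ is a simple reflection, an easy induction using Proposition \ref{braidrels} (the additivity of convolution under the length-additive condition $\ell(uv) = \ell(u) + \ell(v)$) gives $\TT_w = \TT_{s_1} \conv \TT_{s_2} \conv \cdots \conv \TT_{s_k}$. Indeed, at each stage $\ell(s_1 \cdots s_{j} s_{j+1}) = \ell(s_1 \cdots s_j) + 1 = \ell(s_1 \cdots s_j) + \ell(s_{j+1})$ because the expression is reduced, so Proposition \ref{braidrels} applies.

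Next, by Proposition \ref{calcprop}(ii), for each simple reflection $s$ we have $\TT_s \conv \DD\TT_s = \11 = \DD\TT_s \conv \TT_s$, so $\TT_s$ is invertible with $\TT_s^{-1} = \DD\TT_s$. Now I would simply observe that in any monoidal category a composite of invertible objects is invertible: set
\[
\TT_w^{-1} = \DD\TT_{s_k} \conv \DD\TT_{s_{k-1}} \conv \cdots \conv \DD\TT_{s_1},
\]
and check directly, using associativity of convolution and cancelling adjacent pairs $\TT_{s_i} \conv \DD\TT_{s_i} = \11$ (respectively $\DD\TT_{s_i} \conv \TT_{s_i} = \11$), that $\TT_w \conv \TT_w^{-1} = \11 = \TT_w^{-1} \conv \TT_w$. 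This is the standard "telescoping" cancellation and requires no real computation beyond repeated application of Proposition \ref{calcprop}(ii).

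There is essentially no serious obstacle here; the only point worth a moment's care is that the factorization $\TT_w = \TT_{s_1} \conv \cdots \conv \TT_{s_k}$ genuinely requires the expression to be \emph{reduced}, so that the hypothesis of Proposition \ref{braidrels} is met at each step — with a non-reduced word the intermediate convolutions would not be of the form $\TT_{(\cdot)}$. One could note in passing that the inverse $\TT_w^{-1}$ so constructed is independent of the chosen reduced expression (it must be, since inverses in a monoidal category are unique), but this is not needed for the statement. Note also that, although we will not use it, the same argument identifies $\TT_w^{-1}$ with $\DD(\TT_{w^{-1}})$ up to the appropriate shift, via the compatibility $\II \TT_w = \TT_{w^{-1}}$ and the fact that $\II$ reverses convolution.
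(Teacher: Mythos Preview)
Your proposal is correct and follows exactly the approach the paper takes: the paper's proof is the single line ``Combine Prop.~\ref{braidrels} with Prop.~\ref{calcprop}(ii)'', and your argument is precisely the unpacking of that sentence via a reduced expression.
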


\begin{proof}Combine Prop. \ref{braidrels} with Prop. \ref{calcprop}(ii).
\end{proof}
\section{Action on coherent sheaves}\label{s:coherent}
Let $X$ be a smooth variety and $\pi_X\colon T^*X\to X$ its cotangent bundle.
\subsection{Filtered $\cD$-modules and mixed Hodge modules}
 Let $F_i(\cD_X)$ denote the sub-sheaf of $\cD_X$ consisting of differential operators of degree at most $i$. This defines a filtration of $\cD_X$. Write $\gr \cD_X$ for the associated graded sheaf of rings. Then we have a canonical isomorphism $\gr \cD_X \simeq \pi_{X*}\cO_{T^*X}$. We identify $\gr\cD_X$ with $\pi_{X*}\cO_{T^*X}$ via this isomorphism.
A \emph{filtered $\cD_X$-module} is a pair $(M,F)$, where $M$ is a $\cD_X$-module and $F$ is an exhaustive filtration of $M$ by sub-sheaves such that
$F_i(\cD_X)F_jM \subseteq F_{i+j}M$. The filtration $F$ is a \emph{good filtration} if 
$\gr(M)$ is coherent as a $\gr\cD_X$-module. The support of $\cO_{T^*X}\otimes_{\pi_X^{-1}\gr\cD_X}\gr(M)$ is the \emph{characteristic variety} of $M$. 

A mixed Hodge module $M\in \MHM(X)$ is a tuple $(M,F, \rat(M), W)$, where $M$ is a regular holonomic $\cD_X$-module, $F$ is a good filtration on $M$ (the \emph{Hodge filtration}), $\rat(M)$ is a perverse sheaf on $X$ with $\QQ$-coefficients (the \emph{rational structure}) such that $\DR(M) = \CC\otimes_{\QQ}\rat(M)$, where $\DR$ is the de Rham functor, and $W$ is the \emph{weight filtration} on $(M,F, \rat(M))$. This data is required to satisfy several compatibilities which we only recall as needed.
Morphisms in $\MHM(X)$ respect the filtrations $F$ and $W$ strictly.
Given $(M, F, \rat(M), W)\in\MHM(X)$,
$M(n) = (M, F_{\bullet - n}, \QQ(n)\otimes_{\QQ} \rat(M), W_{\bullet+2n})$,
where $\QQ(n)=(2\pi\sqrt{-1})^n\QQ$.

The weight filtration $W$ and rational structure $\rat(M)$ are not particularly relevant for us in this section. Consequently, we omit them from our notation from here on and focus on the filtered $\cD$-module structure underlying a mixed Hodge module.

\subsection{The functor $\tilde\gr$}
Let $(M, F)\in \MHM(X)$. Taking the associated graded with respect to $F$ gives a coherent $\gr(\cD_X)$-module $\gr(M)$. Hence, we obtain an exact functor from $\MHM(X)$ to graded coherent $\gr(\cD_X)$-modules.
We have $\CC^{*}$ acting on $T^*X$ via dilation of the fibres of $\pi_X$.
As $\pi_X$ is affine, $\pi_{X*}$ gives an equivalence between $\CC^*$-equivariant quasi-coherent $\cO_{T^*X}$-modules and graded quasi-coherent $\pi_{X*}\cO_{T^*X}$-modules.
Thus, we obtain an exact functor $\tilde\gr\colon \MHM(X)\to \mathrm{Coh}^{\CC^*}(\cO_{T^*X})$. Explicitly,
$\tilde\gr(M) = \cO_{T^*X}\otimes_{\pi_X^{-1}\gr\cD_X} \pi_X^{-1}\gr (M)$,
with $\CC^*$ action on $\tilde\gr(M)$ defined by
$z\cdot (f(x,\xi)\otimes m_i) = f(x,z^{-1}\xi) \otimes z^{-i}m_i$,
where $z\in \CC^*$, $f(x,\xi)\in\cO_{T^*X}$, and $m_i$ is in the $i$-th component of $\gr(M)$. 

\subsection{Tate twist and $\tilde\gr$}
For $n\in \ZZ$ 
let $\qq^n\in \Coh^{\CC^*}(\pt)$ be the one dimensional $\CC^*$-module with the action of $z\in \CC^*$ given by multiplication by $z^n$. Let $a\colon T^*X\to \pt$ be the obvious map. For $M\in D^{\CC^*}(\cO_{T^*X})$, set
$M(n) = a^*\qq^{-n} \otimes_{\cO_{T^*X}} M$.
Evidently, if $N\in D^b_m(X)$, then $\tilde\gr(N(n)) = \tilde\gr(N)(n)$.
\subsection{Correspondences}Let $f\colon X\to Y$ be a morphism of smooth varieties. Associated to $f$ we have the diagram $T^*X\mapleft{f_d} T^*Y\times_Y X \mapright{f_{\pi}} T^*Y$,
where $f_{\pi}$ is the base change of $f$ along $T^*Y\to Y$, and $f_d$ is the map dual to the derivative.
Let $T^*_XX\subseteq T^*X$ denote the zero section. Set $T^*_XY = f_d^{-1}(T^*_XX)$. If $f$ is the inclusion of a closed subvariety, then $T^*_XY$ is the conormal bundle to $X$ in $T^*Y$.
Let $\Lambda\subseteq T^*Y$ be a conic (i.e. $\CC^*$-stable) subvariety. Then $f$ is \emph{non-characteristic} for $\Lambda$ if
$f_{\pi}^{-1}(\Lambda) \cap T^*_XY \subseteq T^*_YY \times_Y X$.
This is equivalent to $f_d|_{f_{\pi}^{-1}(\Lambda)}$ being finite. We say $f$ is non-characteristic for $M\in \MHM(X)$ if $f$ is non-characteristic for the characteristic variety of $M$.

Let $X\mapleft{f}Z\mapright{g}Y$ be a diagram of smooth varieties such that the canonical map $Z\to X\times Y$ is a closed immersion. We call such a diagram a \emph{correspondence} between $X$ and $Y$. Associated to such a correspondence we have a functor $\Phi_{X|Y}\colon D^b_m(X)\to D^b_m(Y)$, $M\mapsto g_*f^*M$. We also have a commutative diagram
\[\xymatrixrowsep{1mm}\xymatrix{
& &T^*_Z(X\times Y)\ar@/^0pc/[ld] \ar@/^0pc/[rd] \ar@/_2pc/[lldd]_-{q_X}\ar@/^2pc/[rrdd]^-{q_Y}\\
& T^*X\times_X Z \ar@/^0pc/[ld]_-{f_{\pi}}\ar@/^0pc/[rd]^-{f_d}& & T^*Y\times_Y Z \ar@/^0pc/[ld]_-{g_d} \ar@/^0pc/[rd]^-{g_{\pi}}\\
T^*X & & T^*Z & & T^*Y
}
\]
with middle square cartesian. So we obtain a correspondence 
$T^*X\mapleft{q_X} T^*_Z(X\times Y) \mapright{q_Y} T^*Y$.
For $M\in D^{\CC^*}(\cO_{T^*X})$ set 
$
\tilde{\Phi}_{X|Y}(M) = q_{Y*}(q_X^*M\otimes_{\cO_{T^*_Z(X\times Y)}} \rho^*\omega_{Z/Y})[d_{Z/X}](-d_{Z/Y})$,
where $\rho\colon T^*_Z(X\times Y)\to Z$ is the evident map. 
\begin{thm}[{\cite[Th\'eor\`eme 3.1.1]{La2}}]\label{laumoncorr}
Let $X\mapleft{f}Z\mapright{g}Y$ be a correspondence with $g$ proper. If $f$ is non-characteristic for $M\in D^b_m(X)$, then
\[ \tilde\gr \circ \Phi_{X|Y}(M) = \tilde{\Phi}_{X|Y}\circ \tilde\gr(M). \]
\end{thm}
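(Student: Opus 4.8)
The plan is to reduce the statement to two basic cases — a closed immersion and a smooth projection — and verify each directly, after first recording the behaviour of $\tilde\gr$ under the two half-steps $f^*$ and $g_*$ separately. First I would factor the correspondence: since $Z\to X\times Y$ is a closed immersion, we have $f = p_X \circ \iota$ and $g = p_Y\circ \iota$, where $\iota\colon Z\hookrightarrow X\times Y$ is the immersion and $p_X,p_Y$ are the projections. On the coherent side, the correspondence $T^*X \mapleft{q_X} T^*_Z(X\times Y)\mapright{q_Y} T^*Y$ factors compatibly through $T^*(X\times Y)$ via the diagram with cartesian middle square displayed above; the dualizing twist $\rho^*\omega_{Z/Y}$ and the shift/twist $[d_{Z/X}](-d_{Z/Y})$ are additive in composition of correspondences (using $\omega_{Z/Y} = \omega_{Z/X}\otimes f^{-1}\omega_{X/Y}$-type identities and additivity of relative dimensions), so $\tilde\Phi_{X|Y}$ is compatible with this factorization. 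Hence it suffices to treat: (a) $f$ smooth (in fact a projection) with $g$ a closed immersion, and (b) $f$ a closed immersion with $g$ smooth and proper — or, even more economically, to prove the two lemmas ``$\tilde\gr\circ f^* = \tilde f^* \circ \tilde\gr$ for $f$ smooth, non-characteristic being automatic'' and ``$\tilde\gr\circ g_* = \tilde g_* \circ \tilde\gr$ for $g$ proper'', where $\tilde f^*$, $\tilde g_*$ are the evident coherent operations on $T^*$ read off from the $f_d$, $f_\pi$ diagram.

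The key computational inputs are standard facts about good filtrations. For $f^*$ (equivalently, for the non-characteristic inverse image of filtered $\cD$-modules): if $f$ is non-characteristic for $M$, then $Lf^*M$ is concentrated in degree zero, the pullback filtration is good, and $\gr$ commutes with this pullback — this is exactly the filtered/graded refinement of Kashiwara's non-characteristic pullback theorem, and on associated graded it becomes the statement that $\cO_{T^*X\times_X Z}\otimes \pi^{-1}\gr M$ is computed by the $f_\pi^*$-pullback followed by $f_{d*}$, which is the content of $\tilde f^*$. The properness of $f_d$ on $f_\pi^{-1}(\text{char. var.})$, guaranteed by the non-characteristic hypothesis, is what makes $q_{Y*}$ on the coherent side behave well (no higher direct images interfering, finite map on the relevant support). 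For $g_*$: since $g$ is proper, $g_*$ of a filtered $\cD$-module admits a good filtration (this uses properness essentially — it is the filtered analogue of preservation of coherence/holonomicity under proper pushforward), and taking $\gr$ commutes with $g_*$; geometrically this is the projection formula plus proper base change along the cartesian middle square in the $T^*$ diagram, together with the appearance of $\omega_{Z/Y}$ as the relative dualizing sheaf in Grothendieck–Serre duality for the proper map. Assembling (a) and (b) along the factorization, with the twists bookkept as above, yields the theorem.

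The main obstacle I expect is the filtered/graded bookkeeping rather than any single hard theorem: one must check that the good filtration on $g_* M$ (resp. on $f^* M$) produced abstractly is the ``right'' one, i.e. that the Rees-module / relative-Spec manipulations identifying $\tilde\gr$ of a pushforward with the coherent pushforward are compatible on the nose with the sign convention on $T^*(X\times X)$ fixed in \S\ref{ss:sign} and with the $\CC^*$-weights (the $z^{-i}$ in the definition of the $\CC^*$-action on $\tilde\gr$). In particular the shifts $[d_{Z/X}]$ and Tate twists $(-d_{Z/Y})$ must be matched against the homological degree shift in $Lf^*$ and the cohomological amplitude of $Rg_*$, and this is where one would lean on the precise formulation in \cite{La2}. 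Since the excerpt explicitly cites Laumon's Th\'eor\`eme 3.1.1 as the source, the honest ``proof'' here is really this reduction-and-citation: decompose into the two half-steps, invoke the non-characteristic pullback theorem in its filtered form and the proper pushforward of good filtrations, and cite \cite{La2} for the compatibility of the twists. I do not anticipate needing any input beyond what is recalled in \S\ref{s:coherent}.
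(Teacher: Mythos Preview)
Your reduction to the two half-steps --- non-characteristic pullback and proper pushforward --- is exactly the route the paper indicates (see the Remark immediately following the statement), and your identification of the factorization through $X\times Y$ and the additivity of the twists is fine.

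However, you gloss over the one genuinely non-trivial point. You write that for $g$ proper, ``$g_*$ of a filtered $\cD$-module admits a good filtration \ldots\ and taking $\gr$ commutes with $g_*$'', treating this as a formal consequence of properness. It is not. Proper pushforward of a filtered $\cD$-module produces a filtered \emph{complex}; the filtration on each cohomology is indeed good, but $\gr$ commutes with $g_*$ only if this filtered complex is \emph{strict} (equivalently, the associated spectral sequence degenerates at $E_1$). For arbitrary filtered $\cD$-modules this fails, and the paper explicitly notes that Laumon's original Th\'eor\`eme 3.1.1 is stated without this hypothesis and is therefore not quite correct in that generality. What saves the situation here is that the filtered $\cD$-module underlies a mixed Hodge module, and strictness under proper pushforward is then Saito's theorem \cite[Th\'eor\`eme 1]{Sa88}. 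This is a deep result, not bookkeeping; your later remark that one must check the filtration is ``the right one'' gestures at the issue but misdiagnoses it as a matter of conventions and sign-tracking rather than as the single substantive input beyond Laumon's formal manipulations. You should name strictness explicitly and cite Saito for it.
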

\begin{remark}
Let $f\colon X\to Y$ be a morphism of smooth varieties. Let $\Gamma_f\subseteq X\times Y$ be the graph of $f$, $p_X\colon \Gamma_f\to X$ and $p_Y\colon \Gamma_f\to Y$ the projection maps. Then we have the correspondence $X\mapleft{p_X}\Gamma_f \mapright{p_Y} Y$, and $\Phi_{X|Y} = f_*$. If $f$ is proper, then Thm. \ref{laumoncorr} implies
$\tilde\gr f_* = f_{\pi*}f_d^!\tilde\gr [d_{X/Y}](-d_{X/Y})$.
Similarly, for the correspondence $Y\mapleft{p_Y}\Gamma_f\mapright{p_X}X$, one has $\Phi_{Y|X} = f^*$. If $f$ is smooth, then Thm. \ref{laumoncorr} implies
$\tilde \gr f^* = f_{d*}f_{\pi}^*\tilde\gr [d_{X/Y}]$.
Although we have obtained the above formulae as consequences of Thm. \ref{laumoncorr}, the proof of Thm. \ref{laumoncorr} proceeds by first obtaining these formulae. Further, in \cite{La1} and \cite{La2} the formulae do not keep track of the $\CC^*$-equivariant structure. Regardless, the (equivariant) formula for non-characteristic pullback is immediate from the definitions. The (equivariant) formula for proper pushforward requires a bit more work which is done in \cite[Lemma 2.3]{T}. With these in hand the proof of Thm. \ref{laumoncorr} proceeds exactly as that of \cite[Th\'eor\`eme 3.1.1]{La2}. We also note that \cite{La1} and \cite{La2} are written purely in the context of filtered $\cD$-modules. In this generality \cite[Th\'eor\`eme 3.1.1]{La2} does not quite hold - a crucial `strictness' assumption that is required for the formula for $\tilde\gr f_*$ is missing. However, this is not a problem for us, since if the filtered $\cD$-module structure is one underlying a mixed Hodge module, then this strictness assumption holds \cite[Th\'eor\`eme 1]{Sa88}.
\end{remark}
%

\subsection{The Steinberg variety}Let $\pi\colon \tilde\Nilcone \to G/B$ denote the cotangent bundle of $G/B$.   
Then $G\times \CC^*$ acts on $\tilde\Nilcone$ (the map $\pi$ is $G$-equivariant and $\CC^*$ acts via dilations of the fibres of $\pi$).
Under the isomorphism of \S\ref{ss:sign}, $\tilde\Nilcone \times \tilde\Nilcone$ is the cotangent bundle of $G/B\times G/B$. Further, $G\times \CC^*$ acts on $\tilde\Nilcone\times\tilde\Nilcone$ via the diagonal action.
The \emph{Steinberg variety} $Z\subseteq \tilde\Nilcone\times\tilde\Nilcone$ is defined by
\[ Z = \bigcup_{w\in W} T^*_{Y_w}(G/B\times G/B).\]
It is a closed $G\times \CC^*$ stable subvariety of $\tilde\Nilcone\times\tilde\Nilcone$. 
The projection $Z \to \tilde\Nilcone$ to either of the two factors is projective.
%
\subsection{Main player: coherent side}
Denote by $\tilde p_{13}\colon\tilde\Nilcone \times \tilde\Nilcone\times\tilde\Nilcone \to \tilde\Nilcone\times\tilde\Nilcone$ the projection on the first and third factor, and let $p_2\colon \tilde\Nilcone\times\tilde\Nilcone\times\tilde\Nilcone \to \tilde\Nilcone$ be projection on the second factor. Define $\tilde r\colon \tilde\Nilcone \times \tilde\Nilcone \times \tilde\Nilcone \to \tilde\Nilcone \times \tilde\Nilcone \times \tilde\Nilcone \times \tilde\Nilcone$ by $\tilde r = \id_{\tilde\Nilcone} \times \Delta \times \id_{\tilde\Nilcone}$, where $\Delta\colon \tilde\Nilcone \to \tilde\Nilcone \times \tilde\Nilcone$ is the diagonal embedding. Let
$\tilde\Hecke \subseteq D^{\CC^*}(\cO_{\tilde\Nilcone\times\tilde\Nilcone})$
be the full subcategory consisting of complexes whose cohomology sheaves are supported on $Z$.
Define a bifunctor $-\conv - \colon \tilde\Hecke \times \tilde\Hecke \to \tilde\Hecke$ by the formula
\[ M\conv N = \tilde p_{13*}\tilde r^* (M\boxtimes N).\]
This endows $\tilde\Hecke$ with a monoidal structure. 
The unit is $\Delta_*\cO_{\Delta}$.

Define $\gamma\colon D^b(G\backslash(G/B\times G/B)) \to \tilde\Hecke$ by
\[ \gamma(M) = \tilde\gr \mathrm{For}(M) \otimes_{\cO_{\tilde \Nilcone\times\tilde\Nilcone}} \Delta_*\pi^*\omega_{G/B}^{-1} (-d_{G/B}). \]
\begin{thm}\label{gammamonoidal}The functor
$\gamma$
is monoidal.
\end{thm}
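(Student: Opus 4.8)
The plan is to reduce the monoidality of $\gamma$ to the compatibility of $\tilde\gr$ with the correspondence functors built from $r = \id_{G/B}\times\Delta\times\id_{G/B}$ and $p_{13}$, as provided by Theorem~\ref{laumoncorr}. Recall from \S\ref{ss:altdesc} that convolution on $D^b_m(G\backslash(G/B\times G/B))$ is $M\conv N = p_{13!}r^*(M\boxtimes N)[-d_{G/B}]$, while on $\tilde\Hecke$ it is $M\conv N = \tilde p_{13*}\tilde r^*(M\boxtimes N)$. So the first step is to express $p_{13!}r^*$ as the correspondence functor $\Phi_{Q|P}$ associated to the correspondence $(G/B)^2 \mapleft{r} (G/B)^3 \mapright{p_{13}} (G/B)^2$ — noting that $p_{13}$ is proper since $G/B$ is proper — and then identify the induced correspondence $T^*(G/B)^2 \mapleft{q_X} T^*_{(G/B)^3}((G/B)^2\times(G/B)^2) \mapright{q_Y} T^*(G/B)^2$ with the correspondence $\tilde\Nilcone^2 \leftarrow (\text{graph of }\tilde r \text{ inside } \tilde\Nilcone^3) \to \tilde\Nilcone^2$ underlying $\tilde p_{13*}\tilde r^*$ on the coherent side. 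Here one uses the sign convention of \S\ref{ss:sign} so that $T^*(G/B\times G/B) \simeq \tilde\Nilcone\times\tilde\Nilcone$ correctly; the diagonal $\Delta$ contributes a conormal bundle which under the antipode convention becomes the diagonal $\tilde\Nilcone\hookrightarrow\tilde\Nilcone\times\tilde\Nilcone$, matching $\tilde r$.

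The second step is the non-characteristic hypothesis. For $M,N\in\Hecke$, the external product $M\boxtimes N$ has characteristic variety inside $Z\times Z$, and one must check that $r$ is non-characteristic for it, i.e.\ that $r_\pi^{-1}(\mathrm{ch}(M\boxtimes N)) \cap T^*_{(G/B)^3}(G/B)^4 \subseteq$ (zero section). This is the geometric heart of the matter: it amounts to the transversality statement that the closures $\overline{Y_w}\times\overline{Y_{w'}}$ meet the diagonal factor appropriately, equivalently that $Y_{ww'}$-type fibre products behave well — essentially the same geometry underlying Prop.~\ref{braidrels}. I expect this to be routine given that $Z$ is Lagrangian and the orbits $Y_w$ are $G$-orbits, but it is the step requiring genuine input. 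Granting it, Theorem~\ref{laumoncorr} yields $\tilde\gr\circ(p_{13!}r^*(-\boxtimes-)) \cong \tilde p_{13*}\tilde r^*(\tilde\gr(-)\boxtimes\tilde\gr(-))$ up to the shift $[d_{Z/X}]$ and twist $(-d_{Z/Y})$ dictated by $\tilde\Phi$, together with the line bundle $\rho^*\omega_{Z/Y}$.

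The third step is bookkeeping of shifts, Tate twists, and the correction line bundle $\Delta_*\pi^*\omega_{G/B}^{-1}(-d_{G/B})$ in the definition of $\gamma$. One computes $d_{Z/X}$ and $d_{Z/Y}$ for the correspondence above (both projections from the graph of $\tilde r$, so the dimension counts involve $d_{(G/B)^3} - d_{(G/B)^2} = d_{G/B}$), checks that $\omega_{Z/Y}$ pulled back along $\rho$ matches a power of $\pi^*\omega_{G/B}$, and verifies that the twisting factor in $\gamma$ is exactly what is needed to convert the $\tilde\Phi$-normalization into the clean formula $M\conv N = \tilde p_{13*}\tilde r^*(M\boxtimes N)$ with no shift or twist — i.e.\ that $\gamma(M)\conv\gamma(N)\cong\gamma(M\conv N)$ with the $\omega$-factors and the $(-d_{G/B})$-twists cancelling. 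The main obstacle is really the combination of the non-characteristic verification in step two with getting all the $\omega_{X/Y}$, $\omega_{Z/Y}$, and Tate-twist conventions to cancel precisely in step three; neither is deep, but the second is where sign and normalization errors hide. Finally one checks $\gamma$ sends the unit $i^*\Delta_*\const{G/B}[\cdots]$ (i.e.\ $\11$) to $\Delta_*\cO_\Delta$, which is immediate since $\tilde\gr$ of the constant sheaf on the diagonal is $\cO$ of the conormal = diagonal, and the $\omega_{G/B}^{-1}$-twist in $\gamma$ compensates the $\omega_{G/B}$ appearing in $\tilde\gr$ of a $D$-module on $G/B$ restricted to the zero section, giving $\Delta_*\cO_\Delta$ on the nose. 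Associativity and unitality constraints for the monoidal functor then follow formally from those for $-\conv-$ on both sides, via the naturality of the isomorphism in Theorem~\ref{laumoncorr}.
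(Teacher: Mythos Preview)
Your approach is the paper's: apply Theorem~\ref{laumoncorr} to the correspondence $(G/B)^4 \xleftarrow{r} (G/B)^3 \xrightarrow{p_{13}} (G/B)^2$, verify the non-characteristic hypothesis, and track the twists. Two places where your reasoning drifts from the paper's, though neither is fatal. First, the non-characteristic check has nothing to do with Prop.~\ref{braidrels} (which only says something when lengths add) or with transversality of orbit closures in the base; the paper's point is simply that $G$-equivariance forces the characteristic variety of $M\boxtimes N$ into $Z\times Z$, and then the condition unwinds to: if $(\xi_1,\xi_2)\in Z$ with $\xi_1=0$ then $\xi_2=0$, which holds because both components of a point of $Z$ have the same image under the Springer map $\mu\colon\tilde\Nilcone\to\Nilcone$ and $\mu^{-1}(0)$ is the zero section. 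Second, your claim that the associativity constraint ``follows formally from naturality'' is too quick: one needs the isomorphisms of Theorem~\ref{laumoncorr} to be compatible with \emph{composition} of correspondences, not just natural in the input, and the paper cites \cite[\S2.6]{La2} for precisely this compatibility of adjunction and base change morphisms.
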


\begin{proof}That $\gamma$ preserves the unit object can be seen directly. Now apply Thm. \ref{laumoncorr} to the correspondence
\[ G/B\times G/B\times G/B\times G/B \mapleft{r} G/B\times G/B\times G/B \mapright{p_{13}}G/B\times G/B.\]
The $G$-equivariance of $M$ and $N$ implies that the characteristic variety of $M\boxtimes N$ is contained in $Z$. Further, $r$ is non-characteristic for $M\boxtimes N$. Consequently, $\gamma(M \conv N) = \gamma(M)\conv \gamma(N)$. To complete the proof we need to argue that the associativity constraints on both sides are compatible. The associativity constraint on either side is defined via the usual adjunction maps and base change (iso)morphisms. These are compatible with each other by \cite[\S2.6]{La2}.
\end{proof}
\begin{remark}\label{fairytale}Let me indicate how one \emph{might} remove the dependence on Hodge modules from our arguments. It is well known \cite{So} that $B$-equivariant hypercohomology defines a monoidal functor from (the non-mixed category) $D^b(B\backslash G/B)$ to (bi)modules for the equivariant cohomology ring $H^*_{B\times B}(\pt)$ that is full and faithful for morphisms between the $IC(X_w, \const{X}_w)$s. It is also known \cite{Sc} that $D^b(B\backslash G/B)$ is equivalent to the category $\Ext_B^{\bullet}(G/B)-\mathrm{dgperf}$ of perfect differential graded $\Ext_B^{\bullet}(G/B) = \End^{\bullet}_{D^b(B\backslash G/B)}(\bigoplus_{w\in W}IC(X_w, \const{X_w}))$-modules. Consequently, one obtains a full and faithful functor from $D^b(B\backslash G/B)$ to $\mathrm{Ho}(H^*_{B\times B}(\pt)-\mathrm{dgmod})$, the homotopy category of differential graded $H^*_{B\times B}(\pt)$-modules. Now the analogue of equivariant cohomology/pushing to a point on the coherent side should provide a similar functor to $\mathrm{Ho}(H^*_{B\times B}(\pt)-\mathrm{dgmod})$ allowing us to `match' our categories up. The slightly delicate issue here is that one should work with equivariant coherent sheaves on the Steinberg variety for the Langlands dual group, see \cite{Bez} and \cite{So}. Evidence that this idea has some hope of working is provided by the results of \cite{Bez}.
I hope to provide details and a precise formulation in future work.
\end{remark}


\begin{thebibliography}{99}
\bibitem[BL]{BL94} {\sc J.\ Bernstein, V.\ Lunts}, {\sl Equivariant sheaves and functors}, Lecture Notes in Mathematics \textbf{1578}, Springer-Verlag, Berlin (1994).
\bibitem[Bez]{Bez} {\sc R. Bezrukavnikov}, {\em On two geometric realizations of an affine Hecke algebra}, arXiv:1209.0403.
\bibitem[BR1]{BR} {\sc R. Bezrukavnikov, S. Riche}, {\em Affine braid group actions on derived categories of Springer resolutions}, arXiv:1101.3702v2.
\bibitem[BR2]{BRmhm} {\sc R. Bezrukavnikov, S. Riche}, {\em Hodge $D$-modules and braid group actions}, unpublished.
\bibitem[G1]{G} {\sc V. Ginzburg}, {\em Characteristic varieties and vanishing cycles}, Invent. Math. \textbf{84} (1986), 327-402.
\bibitem[Groj]{Groj} {\sc I. Grojnowski}, {\em Affinizing quantum algebras: From $\cD$-modules to K-theory}, unpublished, manuscript available at \url{http://www.dpmms.cam.ac.uk/~groj/papers.html}.
\bibitem[KaT]{KaT} {\sc M. Kashiwara, T. Tanisaki}, {\em The characteristic cycles of holonomic systems on a flag manifold}, Invent. Math. \textbf{77} (1984), 185-198.
\bibitem[KL]{KL} {\sc D. Kazhdan, G. Lusztig}, {\em Proof of the Deligne-Langlands conjecture for Hecke algebras}, Invent. Math. \textbf{87} (1987), 153-215.
\bibitem[KTh]{KTh} {\sc M.\ Khovanov, R.\ Thomas}, {\em Braid cobordisms, triangulated categories, and flag varieties}, Homology, Homotopy Appl.\ \textbf{9} (2007), no.\ 2, 19-94.
\bibitem[La1]{La1} {\sc G. Laumon}, {\em Sur la cat\'egorie d\'eriv\'ee des $\cD$-modules filtr\'es}, Lecture Notes in Math. \textbf{1016}, Springer, Berlin (1983), 151-237.
\bibitem[La2]{La2} {\sc G. Laumon}, {\em Transformations canoniques et sp\'ecialisation pour les $\cD$-modules filtr\'es}, Systemes Differentiels (Luminy, 1983), Ast\'erisque \textbf{130} (1985), 56-129.
\bibitem[MV]{MV} {I. Mirkovi\'c, K. Vilonen}, {\em Characteristic varieties of character sheaves}, Invent. Math. \textbf{93} (1988), 405-418.
\bibitem[R]{Ro2} {\sc R. Rouquier}, {Cat\'egories d\'eriv\'ees et g\'eom\'etrie birationnelle}, S\'eminaire Bourbaki, Ast\'erique \textbf{307} (2006), Exp. No. 946, 283-307.
\bibitem[Sa88]{Sa88} {\sc M. Saito}, {\em Modules de Hodge polarisables}, Publ. RIMS \textbf{24} (1988), 221-333.
\bibitem[Sc]{Sc} {\sc O. M. Schn\"urer}, {\em Equivariant sheaves on flag varieties}, arXiv:0809.4785.
\bibitem[ST]{ST} {\sc P. Seidel, R. Thomas}, {\em Braid group actions on derived categories of coherent sheaves}, Duke Math. J. \textbf{108} (2001), no. 1, 37-108.
\bibitem[So]{So} {\sc W. Soergel}, {\em Combinatorics of Harish-Chandra modules}, J. Reine Angew. Math. \textbf{429} (1992), 49-74.
\bibitem[Sp82]{Sp82} {\sc T.\ A.\ Springer}, {\em Quelques applications de la cohomologie d'intersection}, Bourbaki Seminar, Vol. 1981/1982, Exp. 589, Ast\'erisque \textbf{92-93}, Soc. Math. France, Paris (1982).
\bibitem[Ta]{T} {\sc T.\ Tanisaki}, {\em Hodge modules, equivariant $K$-theory and Hecke algebras}, Publ.\ RIMS \textbf{23} (1987), 841-879.
\end{thebibliography}
\end{document}